\newcommand*\fullref[3][\relax]{%
  \ifdefined\hyperref%
    {\hyperref[#3]{#2\penalty 200\ \ref*{#3}#1}}%
  \else%
    {#2\penalty 200\ \relax\ref{#3}#1}%
  \fi%
}
\newcommand*\tikzremember[2]{\tikz[remember picture]\node[inner sep=0mm] (#1) {#2};}
\newcommand*\tikzmatrixline[3]{%
  \ifstrequal{#3}{l}{%
    \tikz[matrixoverlay]\draw[matrixline] ($ (#1.north west) + (-5pt,0) $) -- ($ (#2.south west) + (-5pt,-1pt) $);
  }{%
    \ifstrequal{#3}{r}{%
      \tikz[matrixoverlay]\draw[matrixline] ($ (#1.north east) + (5pt,0) $) -- ($ (#2.south east) + (5pt,-1pt) $);
    }{%
      \ifstrequal{#3}{t}{%
        \tikz[matrixoverlay]\draw[matrixline] ($ (#1.north west) + (-1pt,3.5pt) $) -- ($ (#2.north east) + (1pt,3.5pt) $);
      }{%
        \tikz[matrixoverlay]\draw[matrixline] ($ (#1.south west) + (-1pt,-3.5pt) $) -- ($ (#2.south east) + (1pt,-3.5pt) $);
      }%
    }%
  }%
}
\tikzset{
  matrixoverlay/.style={
    remember picture,
    overlay,
    matrixline/.style={
      thick,
      gray,
      yshift=-3.5pt,
    },
  },
}
\theoremstyle{definition}
\newtheorem{definition}{Definition}[section]
\theoremstyle{plain}
\newtheorem{corollary}[definition]{Corollary}
\newtheorem{lemma}[definition]{Lemma}
\newtheorem{proposition}[definition]{Proposition}
\newtheorem{theorem}[definition]{Theorem}
\numberwithin{equation}{section}
\newcommand*{\defterm}[1]{\emph{#1}}
\newcommand\chyph{\penalty\@M-\hskip\z@skip}
\DeclarePairedDelimiter{\parens}{\lparen}{\rparen}
\DeclarePairedDelimiter{\set}{\{}{\}}
\DeclarePairedDelimiterX{\gset}[2]{\{}{\}}{\,#1:#2\,}
\newcommand{\sizeddelimiter}[2]{\bBigg@{#1}#2}
\newcommand*{\nset}{\mathbb{N}}
\newcommand*{\rset}{\mathbb{R}}
\newcommand*{\tset}{\mathbb{T}}
\newcommand*{\emptyword}{\varepsilon}
\DeclarePairedDelimiterX{\pres}[2]{\langle}{\rangle}{#1\,\delimsize\vert\,\mathopen{}#2}
\newcommand*{\aA}{\mathcal{A}}
\newcommand*{\plac}{{\mathsf{plac}}}
\newcommand*{\placcong}{\equiv_\plac}
\newcommand*{\plit}{\mathrm{P}}
\newcommand*{\pplac}[2][]{\plit_{\plac}\parens[#1]{#2}}
\begin{document}

\title[A note on identities]{A note on identities in plactic monoids and monoids of upper-triangular tropical matrices}

\author[A.J. Cain]{Alan J. Cain}
\address{%
Centro de Matem\'{a}tica e Aplica\c{c}\~{o}es (CMA)\\
Faculdade de Ci\^{e}ncias e Tecnologia\\
Universidade Nova de Lisboa\\
2829--516 Caparica\\
Portugal
}
\email{%
a.cain@fct.unl.pt
}

\thanks{The first author was supported by an Investigador {\sc FCT} fellowship ({\sc IF}/01622/2013/{\sc CP}1161/{\sc
    CT}0001).}

\author[G. Klein]{Georg Klein}
\address{%
  Institute of Mathematics\\
  University of Warsaw\\
  Banacha~2\\
  02--097 Warsaw\\
  Poland
}
\email{%
  gklein@mimuw.edu.pl
}

\author[\L. Kubat]{\L ukasz Kubat}
\address{%
  Institute of Mathematics\\
  University of Warsaw\\
  Banacha~2\\
  02--097 Warsaw\\
  Poland
}
\email{%
  lukasz.kubat@mimuw.edu.pl
}

\author[A. Malheiro]{Ant\'{o}nio Malheiro}
\address{%
Centro de Matem\'{a}tica e Aplica\c{c}\~{o}es (CMA)\\
Faculdade de Ci\^{e}ncias e Tecnologia\\
Universidade Nova de Lisboa\\
2829--516 Caparica\\
Portugal
}
\address{%
Departamento de Matem\'{a}tica\\
Faculdade de Ci\^{e}ncias e Tecnologia\\
Universidade Nova de Lisboa\\
2829--516 Caparica\\
Portugal
}
\email{%
ajm@fct.unl.pt
}

\thanks{For the first and fourth authors, this work was partially supported by by the Funda\c{c}\~{a}o para a
  Ci\^{e}ncia e a Tecnologia (Portuguese Foundation for Science and Technology) through the project {\sc UID}/{\sc
    MAT}/00297/2013 (Centro de Matem\'{a}tica e Aplica\c{c}\~{o}es) and the project {\scshape PTDC}/{\scshape
    MHC-FIL}/2583/2014. Much of the research leading to this paper was undertaken during visits by the first and fourth
  authors to the University of Warsaw and these authors thank the university for its hospitality.}

\author[J. Okni\'{n}ski]{Jan Okni\'{n}ski}
\address{%
  Institute of Mathematics\\
  University of Warsaw\\
  Banacha~2\\
  02--097 Warsaw\\
  Poland
}
\email{%
  okninski@mimuw.edu.pl
}

\begin{abstract}
  This paper uses the combinatorics of Young tableaux to prove the plactic monoid of infinite rank does not satisfy a
  non-trivial identity, by showing that the plactic monoid of rank $n$ cannot satisfy a non-trivial identity of length
  less than or equal to $n$. A new identity is then proven to hold for the monoid of $n \times n$ upper-triangular
  tropical matrices. Finally, a straightforward embedding is exhibited of the plactic monoid of rank~$3$ into the direct
  product of two copies of the monoid of $3\times 3$ upper-triangular tropical matrices, giving a new proof that the
  plactic monoid of rank~$3$ satisfies a non-trivial identity.
\end{abstract}

\maketitle

\section{Introduction}

Whether the plactic monoid (the monoid of Young tableaux, which is famed for its ubiquity
\cite[Chapter~5]{lothaire_algebraic}) and its finite-rank variants satisfy non-trivial identities is an
actively-researched open question \cite{kubat_identities,izhakian_tropical}. Various monoids that are related to the
plactic monoid satisfy non-trivial identities. For instance, the Chinese monoid \cite{cassaigne_chinese}, which has the
same growth type as the plactic monoid \cite{duchamp_placticgrowth}, satisfies Adian's identity
$xyyxxyxyyx = xyyxyxxyyx$ \cite[Corollary~3.3.4]{jaszunska_chinese}. (This is the shortest non-trivial identity
satisfied by the bicyclic monoid \cite[Chapter~IV, Theorem~2]{adian_defining}.) Furthermore, various `plactic-like'
monoids of combinatorial objects satisfy non-trivial identities \cite{cm_identities}: for instance, the hypoplactic
monoid (the monoid of quasi-ribbon tableaux \cite{krob_noncommutative4,novelli_hypoplactic}), the sylvester monoid
(binary search trees \cite{hivert_sylvester}), and the Baxter monoid (pairs of twin binary search trees
\cite{giraudo_baxter2}).

Let $\plac$ denote the (infinite-rank) plactic monoid and let $\plac_n$ denote the plactic monoid of rank $n$, for any
$n \in \nset$. First, $\plac_1$ is monogenic and thus commutative and so trivially satisfies $xy = yx$. The monoid
$\plac_2$ is isomorphic to the Chinese monoid of rank $2$ and so satisfies Adian's identity. The third and fifth authors
proved that $\plac_3$ satisfies the identity $uvvuvu = uvuvvu$, where $u(x,y)$ and $v(x,y)$ are respectively the left
and right side of Adian's identity (and so the identity $uvvuvu = uvuvvu$ has sixty variables $x$ or $y$ on each side)
\cite[Theorem~2.6]{kubat_identities}. Furthermore, $\plac_3$ does not satisfy Adian's identity
\cite[p.~111--2]{kubat_identities}. Izhakian developed a theory that allows one to show that $\plac_3$ is isomorphic to
a subdirect product of two copies of the monoid of $3 \times 3$ upper-triangular tropical matrices $U_3(\tset)$
\cite{izhakian_tropical}. Since the same author had already proven that for any $n \in \nset$ the monoid of $n \times n$
upper-triangular tropical matrices $U_n(\tset)$ satisfies a non-trivial identity \cite{izhakian_identitiestriangular},
this gives an alternative proof that $\plac_3$ satisfies a non-trivial identity. It remains open whether $\plac_n$
satisfies a non-trivial identity for $n \geq 4$.

This note contributes further to this topic. First, \fullref{Section}{sec:plac} uses the combinatorics of Young tableaux
to prove that the infinite-rank plactic monoid $\plac$ does not satisfy a non-trivial identity, in contrast to the
Chinese, hypoplactic, sylvester, and other monoids discussed above. Since $\plac$ is the union of all the
finite-rank plactic monoids $\plac_n$, this implies that there is no non-trivial identity satisfied by all the
$\plac_n$. However, it is still possible that there is a hierarchy of identities satisfied by the various
$\plac_n$. \fullref{Section}{sec:uppertriangular} gives a new, simple, identity satisfied by the monoid of $n \times n$
upper-triangular tropical matrices $U_n(\tset)$; the proof is based on an approach developed by the fifth author
\cite{okninski_identities}. Finally, \fullref{Section}{sec:troprep} gives an elementary proof that $\plac_3$ embeds into
the direct product of two copies of $U_3(\tset)$; this gives another proof that $\plac_3$ satisfies a non-trivial
identity.

\section{Preliminaries and notation}
\label{sec:preliminaries}

This section briefly recalls some definitions and sets up notation; see the references for further background.

An \defterm{identity} is a formal equality between two words in the free monoid, and is \defterm{non-trivial} if the two
words are distinct. A monoid $M$ \defterm{satisfies} such an identity if the equality holds in $M$ under every
substitution of letters in the words by elements of $M$. For example, any commutative monoid satisfies the non-trivial
identity $xy = yx$.

For background on the plactic monoid, Young tableaux, and Schensted's algorithm, see
\cite[Ch.5]{lothaire_algebraic}. Let $\aA = \set{1 < 2 < \dotsb}$ be the set of natural numbers viewed as an infinite ordered alphabet. For
$n \in \nset$, let $\aA_n = \set{1 < 2 < \dotsb < n}$ be the set of the first $n$ natural numbers viewed as a finite
ordered alphabet. For any $u \in \aA^*$, let $\pplac{u}$ be the Young tableaux (the `plactic $\plit$-symbol') computed
from $u$ using Schensted's algorithm. The plactic congruence $\placcong$ relates words in $\aA^*$ that have the same
image under $u \mapsto \pplac{u}$. The \defterm{plactic monoid} $\plac$ is the factor monoid $\aA^*\!/{\placcong}$; the
\defterm{plactic monoid of rank $n$} is the factor monoid $\aA_n^*/{\placcong}$ (where $\placcong$ is naturally restricted
to $\aA_n^*$).

The tropical semiring is $\tset = \rset \cup \set{-\infty}$, and its addition and multiplication operations $\oplus$ and
$\otimes$ are defined by $x \oplus y = \max\set{x,y}$ and $x \otimes y = x + y$. The set of $n \times n$
upper-triangular tropical matrices is denoted $U_n(\tset)$. (An upper-triangular tropical matrix has all entries below
the main diagonal equal to $-\infty$, since this is the additive identity of $\tset$.) For $X,Y \in U_n(\tset)$, write
$X \sim_{\mathrm{diag}} Y$ if $X$ and $Y$ have equal corresponding diagonal entries.

Let $\psi_1\colon U_n(\tset) \to U_{n-1}(\tset)$ be the map that sends a matrix $Z \in U_n(\tset)$ to the matrix in
$U_{n-1}(\tset)$ obtained by erasing the rightmost column and bottommost row of $Z$.  Let
$\psi_2\colon U_n(\tset) \to U_{n-1}(\tset)$ be the map that sends a matrix $Z \in U_n(\tset)$ to the matrix in
$U_{n-1}(\tset)$ obtained by erasing the leftmost column and topmost row of $Z$. The maps $\psi_1$ and $\psi_2$ are
monoid homomorphisms.

\section{The infinite-rank plactic monoid}
\label{sec:plac}

\begin{proposition}
  \label{prop:plackidminlength}
  The plactic monoid $\plac_n$ does not satisfy any non-trivial identity of length less than or equal to $n$.
\end{proposition}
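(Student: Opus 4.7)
My plan is to argue by contradiction. I will suppose that $\plac_n$ satisfies a non-trivial identity $u = v$ with $|u|, |v| \leq n$ and derive a contradiction. Standard substitution arguments (evaluating $x_i \mapsto a^{m_i}$ for a single letter $a$ with varying exponents) will first force $|u| = |v| = \ell$ and $u, v$ to share the same content $(c_1, \ldots, c_k)$ as multi-sets of variables, so that $k \leq \ell \leq n$. I will then set $\pi = \std{u}$ and $\tau = \std{v}$, which are distinct permutations in $S_\ell$ because $u \neq v$ share the same content.

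The cleanest case is when the content is $(1, \ldots, 1)$, so that $k = \ell$ and $u, v$ themselves are permutations in $S_\ell$. For this case I plan to take the bijective single-letter substitution $\sigma(x_i) = \pi^{-1}(i)$ into $\aA_k \subseteq \aA_n$. Then $\sigma(u) = 1 \cdot 2 \cdots \ell$ is the identity permutation, whose $P$-symbol is the unique single-row Young tableau of length $\ell$; while $\sigma(v) = \pi^{-1}\tau$ is a non-identity permutation, whose $P$-symbol therefore has at least two rows. This immediately gives $\sigma(u) \not\placcong \sigma(v)$ in $\plac_n$, contradicting the identity.

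For the general case, where some $c_i \geq 2$, I intend to lift the above idea to a multi-letter substitution. My plan is to partition $\aA_\ell \subseteq \aA_n$ into disjoint consecutive blocks $C_1, \ldots, C_k$ with $|C_i| = c_i$, and to substitute $x_i$ by, for instance, the decreasing column-reading word on $C_i$. Because the blocks are disjoint intervals, Schensted insertion of $\sigma(u)$ behaves in a controlled manner: inserting $C_j$ after $C_i$ with $i < j$ simply appends to the current row, while the reverse order bumps a whole column downward. The key claim I aim to establish is that $P(\sigma(u))$ encodes the permutation $\pi$ up to the fixed block structure, so that $P(\sigma(u)) = P(\sigma(v))$ would force $\pi = \tau$, a contradiction.

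The hardest part will be verifying the encoding claim in this general case. While the qualitative behaviour of disjoint-alphabet column insertions is well understood, reading $\pi$ off the resulting $P$-symbol requires careful bookkeeping through the $\ell$ insertions, and there is a real risk that the plain column-block substitution collapses too much information---for instance, if the underlying super-word in $\plac_k$ happens to be Knuth-equivalent for $u$ and $v$. If this happens, I plan to exploit the slack $\ell \leq n$ by enlarging the blocks $C_i$ beyond the minimum size $c_i$, drawing on the spare letters of $\aA_n \setminus \aA_\ell$ to enrich the substitution with additional bookkeeping letters that break the obstructing plactic equivalences. It is precisely this alphabet slack that makes the hypothesis $\ell \leq n$ essential.
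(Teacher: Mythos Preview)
Your permutation case ($k = \ell$) is correct and clean, but the general case---which is where the real work lies---has a genuine gap: the column-block substitution you propose does \emph{not} encode $\std{u}$ in $\pplac{\sigma(u)}$. Take $u = xyyx$ and $v = yxxy$, so $\ell = 4$, the contents agree, and $\std{u} = 1342 \neq 3124 = \std{v}$. With $C_1 = \{1,2\}$, $C_2 = \{3,4\}$ and $x \mapsto 21$, $y \mapsto 43$, a direct computation gives $\pplac{21434321} = \pplac{43212143}$, both equal to the tableau with rows $113$, $224$, $3$, $4$. So the ``encoding claim'' you hope to establish is simply false for this substitution, and your fallback of enlarging the blocks with spare letters of $\aA_n \setminus \aA_\ell$ is only a hope, not a method: you give no indication of how to choose the enlargement or why it would then separate $u$ from $v$.

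The paper's argument sidesteps this entirely. After reducing to two variables (which preserves length) and padding to length exactly $n$, it substitutes \emph{increasing} words: $x \mapsto s = 12\cdots n$ and $y \mapsto t$, where $t$ is $s$ with a single letter deleted, the letter being determined by the first position $j$ at which $u$ and $v$ differ. The comparison is then made via Schensted's theorem that the height of the first column of $\pplac{w}$ equals the length of the longest strictly decreasing subsequence of $w$: picking one letter from each of the $n$ substituted blocks yields a decreasing subsequence of length $n$ in $u(s,t)$, whereas in $v(s,t)$ the block at position $j$ is the copy of $t$ missing exactly the letter that such a subsequence would need.
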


\begin{proof}
  Suppose, with the aim of obtaining a contradiction, that $\plac_n$ satisfies a non-trivial identity of length less
  than or equal to $n$. Without loss of generality, assume that it satisfies such an identity over the variable set
  $\set{x,y}$, say $u(x,y) = v(x,y)$ of length equal to $n$ (that is, with $|u| = |v| = n$). Interchanging $u$ and $v$
  if necessary, assume $u$ is lexicographically less than $v$ (where the variable set is ordered by $x < y$). Suppose
  $u(x,y) = u_1\cdots u_n$ and $v(x,y) = v_1\cdots v_n$, where each $u_i$ and $v_i$ is a variable $x$ or $y$. If $j$ is
  minimal such that $u_j$ and $v_j$ are different variables, then $u_j$ is $x$ and $v_j$ is $y$.

  Let $s = 12\cdots n \in \aA_n^*$ and let $t = 12\cdots (n-j)(n-j+2)\cdots n \in \aA_n^*$. (So the word $t$ does not
  contain the generator $n-j+1$.)  Since $\plac_n$ satisfies the identity $u(x,y) = v(x,y)$, the equality
  $u\parens[\big]{\pplac{s},\pplac{t}} = v\parens[\big]{\pplac{s},\pplac{t}}$ holds. Thus the Young tableaux
  $\pplac[\big]{u(s,t)}$ and $\pplac[\big]{v(s,t)}$ are equal. In particular, the height of the leftmost columns of
  these Young tableaux are equal. By Schensted's theorem \cite{schensted_longest}, the height of the leftmost column of
  $\pplac{w}$ (for $w \in \aA^*$) is equal to the length of the longest strictly decreasing subsequence in $w$. Thus
  the lengths of the longest strictly decreasing subsequences of $u(s,t)$ and $v(s,t)$ are equal.

  Since $s$ and $t$ only contain symbols in $\set{1,\ldots,n}$, the longest strictly decreasing subsequences of $u(s,t)$
  and $v(s,t)$ cannot be longer than $n$.

  In $u(s,t)$, there is a strictly decreasing subsequence of length $n$, namely $n(n-1)\cdots 21$, where, for
  $i \in \set{1,\ldots,n}$, the symbol $i$ is chosen from the instance of $s$ or $t$ that was substituted for
  $u_{n-i+1}$. Note in particular that since $n-(n-j+1)+1 = j$, the symbol $n-j+1$ is chosen from the instance of $s$
  substituted for $u_j$ (which is the variable $x$). By the previous paragraph, $n(n-1)\cdots 21$ is a \emph{longest}
  strictly decreasing subsequence of $u(s,t)$.

  Thus $v(s,t)$ must also have a strictly decreasing subsequence of length $n$. Since each word $s$ or $t$ is strictly
  increasing, such a subsequence contains at most one symbol chosen from the instance of $s$ or $t$ substituted for each
  $v_i$. Since $|v| = n$, this implies that \emph{exactly} one symbol is chosen from each instance of $s$ or $t$. Hence,
  for $i \in \set{1,\ldots,n}$, the symbol $i$ must be chosen from the instance of $s$ or $t$ that was substituted for
  $v_{n-i+1}$. In particular, since $n-(n-j+1)+1 = j$, the symbol $n-j+1$ is chosen from the instance of $t$
  substituted for $v_j$ (which is the variable $y$). This is a contradiction, because the word $t$ does not contain a
  symbol $n-j+1$.

  Therefore $\plac_n$ does not satisfy a non-trivial identity of length less than or equal to $n$.
\end{proof}

Since every finite-rank plactic monoid is a submonoid of the infinite-rank plactic monoid, the following result is
immediate from \fullref{Proposition}{prop:plackidminlength}:

\begin{theorem}
  The infinite-rank plactic monoid $\plac$ does not satisfy any non-trivial identity.
\end{theorem}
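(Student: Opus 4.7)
The plan is to reduce the statement directly to \fullref{Proposition}{prop:plackidminlength}. Suppose, for contradiction, that $\plac$ satisfies a non-trivial identity $u(x_1,\ldots,x_k) = v(x_1,\ldots,x_k)$ of some finite length $N = \max\set{|u|,|v|}$. The idea is that, because $\plac$ is the union of the finite-rank monoids $\plac_n$, any identity holding in $\plac$ must hold in every $\plac_n$, and so we only need to invoke the proposition for a single sufficiently large $n$.

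To make the passage from $\plac$ to $\plac_n$ rigorous, I would observe that the plactic congruence $\placcong$ on $\aA_n^*$ is the restriction to $\aA_n^*$ of the plactic congruence on $\aA^*$ (this is built into the definition recalled in \fullref{Section}{sec:preliminaries}, since Schensted insertion on a word over $\aA_n$ never produces a symbol outside $\aA_n$). Consequently the inclusion $\aA_n^* \hookrightarrow \aA^*$ descends to an embedding $\plac_n \hookrightarrow \plac$, realising $\plac_n$ as a submonoid of $\plac$. Every identity satisfied by a monoid is inherited by its submonoids, so $\plac_n$ must also satisfy $u = v$.

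Now I would pick any $n \ge N$. Then $u = v$ is a non-trivial identity of length at most $n$ satisfied by $\plac_n$, which directly contradicts \fullref{Proposition}{prop:plackidminlength}. Hence no such identity can exist, and $\plac$ satisfies no non-trivial identity. There is no real obstacle here: the only substantive content is already packaged in the proposition, and the final deduction is just the standard submonoid-inheritance argument together with the observation that the finite-rank plactic monoids sit inside the infinite-rank one as the submonoids on initial segments of the alphabet.
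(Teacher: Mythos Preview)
Your argument is correct and is exactly the paper's own: the paper simply remarks that each $\plac_n$ is a submonoid of $\plac$ and concludes the theorem immediately from \fullref{Proposition}{prop:plackidminlength}. You have just spelled out the embedding and the choice of $n \geq N$ in a bit more detail than the paper bothers with.
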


\section{A new identity for the monoid of upper-triangular tropical matrices}
\label{sec:uppertriangular}

Define
\begin{align*}
u_0(p,q) &= p, & v_0(p,q) &= q,\\
u_1(p,q) &= pqppq, & v_1(p,q) &= pqqpq,
\end{align*}
and inductively define
\begin{equation*}
u_n(p,q) = u_1\parens[\big]{u_{n-1}(p,q),v_{n-1}(p,q)}, \quad v_n(p,q) =  v_1\parens[\big]{u_{n-1}(p,q),v_{n-1}(p,q)}.
\end{equation*}
for $n \geq 2$. Note that the preceding equations hold for $n = 1$, but we only use them as a definition for $n \geq
2$. Note also that $u_1(xy,yx) = v_1(xy,yx)$ is Adian's identity.

\begin{proposition}
  \label{prop:undiagid}
  Let $X,Y \in U_n(\tset)$ with $X \sim_{\mathrm{diag}} Y$. Then $u_{n-1}(X,Y) = v_{n-1}(X,Y)$.
\end{proposition}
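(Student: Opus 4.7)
My plan is to prove \fullref{Proposition}{prop:undiagid} by induction on $n$. The base case $n = 1$ is immediate: a matrix in $U_1(\tset)$ consists of a single tropical entry, so $X \sim_{\mathrm{diag}} Y$ forces $X = Y$, and then $u_0(X,Y) = X = Y = v_0(X,Y)$.

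For the inductive step, I take $X, Y \in U_n(\tset)$ with $X \sim_{\mathrm{diag}} Y$ and set $P = u_{n-2}(X,Y)$, $Q = v_{n-2}(X,Y)$, so that the target equality $u_{n-1}(X,Y) = v_{n-1}(X,Y)$ rewrites as $PQPPQ = PQQPQ$. The homomorphisms $\psi_1$ and $\psi_2$ clearly preserve the relation $\sim_{\mathrm{diag}}$, so $\psi_i(X) \sim_{\mathrm{diag}} \psi_i(Y)$ in $U_{n-1}(\tset)$; the inductive hypothesis then yields $\psi_i(P) = \psi_i(Q)$ for $i = 1, 2$. Jointly, these equalities say that $P$ and $Q$ agree in every entry except possibly the top-right entry $(1,n)$. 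Applying $\psi_1, \psi_2$ once more, this time to the five-letter products $PQPPQ$ and $PQQPQ$, shows these also agree off position $(1,n)$, so it suffices to verify equality of their $(1,n)$-entries.

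To do this, I would expand $(W_1 \cdots W_5)_{1n}$ via the tropical formula: it is the maximum, over weakly increasing paths $1 = i_0 \le \dotsb \le i_5 = n$, of $\sum_t (W_t)_{i_{t-1},i_t}$. Writing $\alpha = P_{11} = Q_{11}$, $\beta = P_{nn} = Q_{nn}$, $a = P_{1n}$, and $b = Q_{1n}$, and using that $P$ and $Q$ agree off $(1,n)$: any path that never takes the direct one-step jump from index $1$ to index $n$ contributes identically to both products, while a path that uses the jump does so at a unique step $t$, contributing $(t-1)\alpha + c_t + (5-t)\beta$, where $c_t = a$ if the $t$th letter is $P$ and $c_t = b$ if it is $Q$. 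The positions of the letter $P$ are $\{1,3,4\}$ in $PQPPQ$ and $\{1,4\}$ in $PQQPQ$; those of $Q$ are $\{2,5\}$ and $\{2,3,5\}$, respectively. The key observation is that $t \mapsto (t-1)\alpha + (5-t)\beta$ is affine linear in $t$, so its maximum over any finite subset of $\{1,\ldots,5\}$ is attained at an extreme point of that subset; since $3$ lies between $1$ and $4$, and also between $2$ and $5$, adjoining or removing the index $t = 3$ leaves the $a$-maximum and the $b$-maximum unchanged, and the $(1,n)$-entries of $PQPPQ$ and $PQQPQ$ coincide. The main obstacle I anticipate is the bookkeeping in the path-sum expansion — cleanly separating the single-jump paths from the rest — but once that is set up, the affine-linearity step bypasses any case analysis on the signs of $\alpha - \beta$ or $a - b$.
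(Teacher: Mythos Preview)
Your proposal is correct, and the overall inductive architecture---base case $n=1$, setting $P=u_{n-2}(X,Y)$ and $Q=v_{n-2}(X,Y)$, using $\psi_1,\psi_2$ and the induction hypothesis to conclude that $P$ and $Q$ agree everywhere except possibly at position $(1,n)$---matches the paper exactly. The divergence is in how the core equality $PQPPQ=PQQPQ$ is established once $P$ and $Q$ are known to differ only at the top-right corner.

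The paper writes $A=(c,a,d;\kappa_1)$ and $B=(c,b,d;\kappa_1)$, then computes $AB$, $ABA$, $ABB$, and finally $ABAAB$ and $ABBAB$ explicitly, arriving at $\max$-expressions for the $(1,n)$-entry that differ only in one term, $a+2c+2d$ versus $b+2c+2d$; it then runs a four-case analysis on the signs of $a-b$ and $c-d$ to exhibit, in each case, a common term dominating both. Your route instead interprets the $(1,n)$-entry via the weighted-path expansion for tropical products, observes that the only paths sensitive to the $(1,n)$-entries of the factors are the ``single-jump'' paths contributing $(t-1)\alpha+c_t+(5-t)\beta$, and then uses the affineness of $t\mapsto(t-1)\alpha+(5-t)\beta$ to see that the index $t=3$ is irrelevant, since it lies between $1,4$ (the other $P$-positions) and between $2,5$ (the other $Q$-positions). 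This replaces the paper's explicit step-by-step multiplication and four-case check with a single structural observation; it also makes transparent \emph{why} the particular word pair $pqppq$, $pqqpq$ works (the differing middle letter is flanked on both sides by occurrences of each letter), which the paper's computation does not. The paper's approach, on the other hand, is more self-contained, requiring no appeal to the path description of tropical matrix powers. One small point worth recording when you write it up: the affineness argument is still valid when $\alpha$ or $\beta$ equals $-\infty$, but you should say a word about that case.
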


The proof strategy is similar to that used by the fifth author to show that $U_n(\tset)$ satisfied a different set of
identities \cite{okninski_identities}.

\begin{proof}
  Proceed by induction on $n$. For $n = 1$, since the only entry of a $1 \times 1$ matrix is on its main diagonal, it
  follows from $X \sim_{\mathrm{diag}} Y$ that $X = Y$; hence $u_0(X,Y) = X = Y = v_0(X,Y)$. This is the base of the induction.

  For the induction step, let $n \geq 2$ and assume the result holds for $n-1$. Let $A = u_{n-2}(X,Y)$ and
  $B = v_{n-2}(X,Y)$.  Recall the homomorphisms $\psi_1,\psi_2\colon U_n(\tset) \to U_{n-1}(\tset)$ defined in
  \fullref{Section}{sec:preliminaries}. By the induction hypothesis,
  \begin{align*}
    u_{n-2}\parens[\big]{\psi_1(X),\psi_1(Y)} &= v_{n-2}\parens[\big]{\psi_1(X),\psi_1(Y)}\text{ and } \\
    u_{n-2}\parens[\big]{\psi_2(X),\psi_2(Y)} &= v_{n-2}\parens[\big]{\psi_2(X),\psi_2(Y)}. \\
    \intertext{Thus}
    \psi_1(A) = \psi_1\parens[\big]{u_{n-2}(X,Y)} &= \psi_1\parens[\big]{v_{n-2}(X,Y)} = \psi_1(B)\text{ and } \\
    \psi_2(A) = \psi_2\parens[\big]{u_{n-2}(X,Y)} &= \psi_2\parens[\big]{v_{n-2}(X,Y)} = \psi_2(B).
  \end{align*}
  Therefore the matrices $A$ and $B$ can differ only in their $(1,n)$-th entries.

  Denote the matrix $C = \begin{bmatrix}c_{ij}\end{bmatrix}$ by $(c_{11},c_{1n},c_{nn};\kappa)$ for some suitable array
  $\kappa$ representing the rest of $C$. That is,
  \[
    (c_{11},c_{1n},c_{nn};\kappa)\quad\text{denotes}\quad
    \begin{bmatrix}
      \tikzremember{cnotattl}{$\phantom{c_{nn}}\mathllap{c_{11}}$} &        & \tikzremember{cnotattr}{$\mathrlap{c_{1n}}\phantom{c_{nn}}$} \\[2mm]
                                        & \kappa &                                   \\[2mm]
                                        &        & \tikzremember{cnotatbr}{$c_{nn}$} \\
    \end{bmatrix}.
    \tikz[matrixoverlay]\draw[matrixline] ($ (cnotattl.south west) + (-1pt,-3.5pt) $) -- ($ (cnotattl.south east) + (1pt,-3.5pt) $)  -- ($ (cnotattl.north east) + (1pt,2.5pt) $);
    \tikz[matrixoverlay]\draw[matrixline] ($ (cnotattr.south east) + (1pt,-3.5pt) $) -- ($ (cnotattr.south west) + (-1pt,-3.5pt) $)  -- ($ (cnotattr.north west) + (-1pt,2.5pt) $);
    \tikz[matrixoverlay]\draw[matrixline] ($ (cnotatbr.north east) + (1pt,3.5pt) $) -- ($ (cnotatbr.north west) + (-1pt,3.5pt) $)  -- ($ (cnotatbr.south west) + (-1pt,-2.5pt) $);
  \]
  Note that if $D = (d_{11},d_{1n},d_{nn};\mu)$, then
  \[
    CD = \parens[\big]{c_{11} + d_{11},\max\set{c_{11}+d_{1n},c_{1n}+d_{nn},\gamma},c_{nn}+d_{nn};\nu},
  \]
  for some $\gamma$ and some array $\nu$. In fact, $\gamma$ is the maximum of the other sums of pairs in the scalar
  product of the first row of $C$ and first column of $D$ and so is only dependent on $\kappa$ and $\mu$ (and not on
  $c_{11}$, $c_{1n}$, $c_{nn}$, $d_{11}$, $d_{1n}$, or $d_{nn}$) and $\nu$ is only dependent on $\kappa$, $\mu$,
  $c_{11}$ and $d_{nn}$ (and not on $c_{1n}$, $c_{nn}$, $d_{11}$, or $d_{1n}$).

  So $A = (c,a,d;\kappa_1)$ and $B = (c,b,d;\kappa_1)$ for some $c,a,b,d \in \tset$ and some array $\kappa_1$. Then
  \[
    AB = \parens[\big]{2c,\max\set{b+c,a+d,\gamma_2},2d,\kappa_2}
  \]
  for some $\gamma_2 \in \tset$ and array $\kappa_2$. Next,
  \begin{align*}
    ABA &= (AB)A = \parens[\big]{3c,\max\set{a+2c,b+c+d,a+2d,d+\gamma_2,\gamma_3},3d;\kappa_3}, \\
    ABB &= (AB)B = \parens[\big]{3c,\max\set{b+2c,b+c+d,a+2d,d+\gamma_2,\gamma_3},3d;\kappa_3};
  \end{align*}
  for some $\gamma_3 \in \tset$ and array $\kappa_3$; note that the these are equal in $(AB)A$ and $(AB)B$ because they depend
  only on equal corresponding entries of $A$ and $B$. Finally,
  \begin{align*}
    ABAAB = (ABA)AB &= \parens[\big]{5c,\max\set{b+4c,a+3c+d,3c+\gamma_2,a+2c+2d,\\
    &\qquad\qquad b+c+3d,a+4d,4d+\gamma_2,2d+\gamma_3,\gamma_5},5d;\kappa_5}, \\
    ABBAB = (ABB)AB &= \parens[\big]{5c,\max\set{b+4c,a+3c+d,3c+\gamma_2,b+2c+2d,\\
    &\qquad\qquad b+c+3d,a+4d,4d+\gamma_2,2d+\gamma_3,\gamma_5},5d;\kappa_5};
  \end{align*}
  for some $\gamma_5 \in \tset$ and array $\kappa_5$; note that these are equal in $(ABA)AB$ and $(ABB)AB$ because they depend
  only on equal corresponding entries of $ABA$ and $ABB$. Note that the expressions for $ABAAB$ and $ABBAB$ only differ
  in the terms $a+2c+2d$ and $b+2c+2d$, respectively.

  Now consider the four possible relative orders of $a$ and $b$ and of $c$ and $d$. In each case, there is some element
  that appears in the $\max\set{\ldots}$ terms of the expressions for both $ABAAB$ and $ABBAB$ and that is greater than
  both $a+2c+2d$ and $b+2c+2d$:
  \begin{align*}
    a \geq b \land c \geq d &\implies a+3c+d \geq a+2c+2d,\;\;b+2c+2d; \\
    a \geq b \land c \leq d &\implies a+4d \geq a+2c+2d,\;\;b+2c+2d; \\
    a \leq b \land c \geq d &\implies b+4c \geq a+2c+2d,\;\;b+2c+2d; \\
    a \leq b \land c \leq d &\implies b+c+3d \geq a+2c+2d,\;\;b+2c+2d.
  \end{align*}
  So in each case, $a+2c+2d$ and $b+2c+2d$ are not the maximum terms in the expressions for $ABAAB$ and $ABBAB$. Thus
  $u_1(A,B) = ABAAB = ABBAB = v_1(A,B)$. Therefore $u_{n-1}(X,Y) = u_1(A,B) = v_1(A,B) = v_{n-1}(X,Y)$.

  Hence, by induction, the result holds for all $n$.
\end{proof}

\begin{theorem}
  \label{thm:tropicalid}
  The monoid of $n \times n$ upper-triangular tropical matrices $U_n(\tset)$ satisfies the identity
  $u_{n-1}(xy,yx) = v_{n-1}(xy,yx)$.
\end{theorem}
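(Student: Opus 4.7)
The plan is to derive the theorem as an essentially immediate consequence of \fullref{Proposition}{prop:undiagid}. Given arbitrary $x, y \in U_n(\tset)$, I would set $X = xy$ and $Y = yx$ and verify that $X \sim_{\mathrm{diag}} Y$; then \fullref{Proposition}{prop:undiagid} yields $u_{n-1}(xy, yx) = v_{n-1}(xy, yx)$, which is precisely the required identity under every substitution of the variables $x, y$ by elements of $U_n(\tset)$.

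The single substantive step is verifying that $xy$ and $yx$ always have identical diagonal entries in $U_n(\tset)$. For this, I would compute directly: the $(i,i)$-entry of $xy$ in the tropical semiring is
\[
  (xy)_{ii} = \bigoplus_{k=1}^{n} x_{ik} \otimes y_{ki} = \max_{1 \leq k \leq n}\set{x_{ik} + y_{ki}}.
\]
Since $x$ and $y$ are upper-triangular, $x_{ik} = -\infty$ whenever $k < i$ and $y_{ki} = -\infty$ whenever $k > i$, so every term in the maximum except $k = i$ is $-\infty$. Hence $(xy)_{ii} = x_{ii} + y_{ii}$, and by the symmetric computation $(yx)_{ii} = y_{ii} + x_{ii}$, which equals $(xy)_{ii}$ since addition in $\rset \cup \set{-\infty}$ is commutative. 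Thus $xy \sim_{\mathrm{diag}} yx$.

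There is no real obstacle: the nontrivial combinatorial work has already been carried out in \fullref{Proposition}{prop:undiagid}, and the only thing to check here is the diagonal-equality observation above, which is a routine computation exploiting upper-triangularity. I would conclude by invoking \fullref{Proposition}{prop:undiagid} with $X = xy$ and $Y = yx$ to obtain the identity, noting that since $x$ and $y$ were arbitrary the identity $u_{n-1}(xy,yx) = v_{n-1}(xy,yx)$ holds throughout $U_n(\tset)$.
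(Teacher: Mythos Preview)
Your proposal is correct and follows essentially the same route as the paper: the paper's proof simply asserts that $XY \sim_{\mathrm{diag}} YX$ for all $X,Y \in U_n(\tset)$ and invokes \fullref{Proposition}{prop:undiagid}. Your only addition is spelling out the diagonal computation explicitly, which is fine.
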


\begin{proof}
  Let $X,Y \in U_n(\tset)$. Then $XY \sim_{\mathrm{diag}} YX$; the result is immediate from
  \fullref{Proposition}{prop:undiagid}.
\end{proof}

\section{A tropical representation of $\plac_3$}
\label{sec:troprep}

For any $w \in \aA^+$ and any $p,q\in \aA$, define:
\[
  w_{pq}=
  \begin{cases}
    \parbox{8cm}{\raggedright the maximal length of a nondecreasing subsequence in $w$ with entries in the interval $[p,q]$} &
    \text{if $p\leq q$,}\\
    -\infty & \text{if $p>q$.}
  \end{cases}
\]
For the identity $\emptyword$ of $\aA^*$, define $\emptyword_{pq}=0$ if $p=q$ and $\emptyword_{pq} = -\infty$ otherwise.

Define a map $\phi_n\colon \aA^* \to U_n(\tset)$ by $\phi_n(w) = \begin{bmatrix}w_{pq}\end{bmatrix}$.

\begin{lemma}
  The map $\phi_n$ is a homomorphism.
\end{lemma}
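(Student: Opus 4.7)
The plan is to verify the two conditions for a monoid homomorphism directly from the combinatorial definition of $w_{pq}$: first that $\phi_n(\emptyword)$ is the tropical identity matrix of $U_n(\tset)$, and second that $\phi_n(uv) = \phi_n(u)\phi_n(v)$ for all $u,v \in \aA^*$. The identity case is immediate: the tropical identity matrix of $U_n(\tset)$ has $0$ on the diagonal and $-\infty$ elsewhere, which matches the stipulated values of $\emptyword_{pq}$.

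For multiplicativity, I would fix $u,v$ and indices $p,q \in \set{1,\ldots,n}$ and compare $(uv)_{pq}$ with the $(p,q)$ entry of $\phi_n(u)\phi_n(v)$, which by tropical matrix multiplication equals $\max_{r}\parens{u_{pr} + v_{rq}}$. When $p > q$ every term involves an entry that is $-\infty$ (since either $p > r$ or $r > q$), and $(uv)_{pq} = -\infty$ as well. So the interesting case is $p \leq q$, where I would prove the two inequalities separately. For the lower bound, given any $r$ with $p \leq r \leq q$, take a nondecreasing subsequence of $u$ of length $u_{pr}$ with entries in $[p,r]$ and concatenate it with a nondecreasing subsequence of $v$ of length $v_{rq}$ with entries in $[r,q]$; the result is a nondecreasing subsequence of $uv$ with entries in $[p,q]$, so $(uv)_{pq} \geq u_{pr} + v_{rq}$. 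For the upper bound, take a longest nondecreasing subsequence $s$ of $uv$ with entries in $[p,q]$, split it into the part $s_u$ coming from $u$ and the part $s_v$ coming from $v$, and let $r$ be the last entry of $s_u$ (so $p \leq r \leq q$); then $|s_u| \leq u_{pr}$ and $|s_v| \leq v_{rq}$, giving $(uv)_{pq} \leq u_{pr} + v_{rq}$.

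The only subtlety, and the main obstacle to a clean write-up, is the case where one of $s_u, s_v$ is the empty subsequence, since then no natural $r$ is defined by the above recipe. This is handled by observing that for any nonempty word $w$ and any $p \leq q$ the quantity $w_{pq}$ is at least $0$, because the empty subsequence is vacuously nondecreasing with entries in $[p,q]$. So if $s_u$ is empty one may take $r = p$ and bound $|s| = |s_v| \leq v_{pq} \leq u_{pp} + v_{pq}$, and symmetrically if $s_v$ is empty one may take $r = q$. With this edge case dispatched, the two inequalities combine to show $\phi_n(uv)_{pq} = (\phi_n(u)\phi_n(v))_{pq}$ for every $(p,q)$, and together with the identity check this completes the proof that $\phi_n$ is a monoid homomorphism.
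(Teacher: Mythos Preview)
Your argument is correct. The identity check and the two-sided inequality for the $(p,q)$ entry are both sound, and your treatment of the boundary cases (empty $s_u$ or $s_v$, and $p>q$) is adequate once one notes that the cases $u=\emptyword$ or $v=\emptyword$ are already covered by the identity check, so that in the multiplicativity step one may take $u,v$ nonempty and hence $u_{pp},v_{qq}\geq 0$.

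The paper organizes the same underlying combinatorics differently. Rather than verifying every entry directly, it proceeds by induction on $n$: the deletion homomorphisms $\psi_1,\psi_2\colon U_n(\tset)\to U_{n-1}(\tset)$ compose with $\phi_n$ to give (shifted copies of) $\phi_{n-1}$, so by induction all entries of $\phi_n(uv)$ and $\phi_n(u)\phi_n(v)$ agree except possibly the $(1,n)$ entry, and that single entry is then checked by exactly the splitting argument you give. Your route is more elementary and self-contained, avoiding the induction entirely; the paper's route reuses the $\psi_1,\psi_2$ machinery already introduced for Section~4 and so only has to write out the combinatorial step once, for the extremal entry. The core idea---that a longest nondecreasing subsequence of $uv$ factors as $\alpha\beta$ with $\alpha$ taking values in $[p,r]$ and $\beta$ in $[r,q]$ for some pivot $r$---is identical in both.
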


\begin{proof}
  Proceed by induction on $n$. If $n=1$ then the claim is clear. Assume that $n \geq 2$ and recall the homomorphisms
  $\psi_1,\psi_2\colon U_n(\tset) \to U_{n-1}(\tset)$ defined in \fullref{Section}{sec:preliminaries}. Let
  $\vartheta_1 = \psi_1\circ\phi_n$ and $\vartheta_2 = \psi_2\circ\phi_n$. Clearly, $\vartheta_1$ is simply $\phi_{n-1}$ and
  is thus a homomorphism by the induction hypothesis. Furthermore, if $w \mapsto \overline{w}$ is the map from
  $\set{2,\ldots,n}^*$ to $\aA_{n-1}^*$, extending $a \mapsto \overline{a-1}$, then
  $\vartheta_2(w) = \phi_{n-1}(\overline{w})$ and so is a homomorphism by the induction hypothesis.

  By the definition of matrix multiplication in $U_{n}(\tset)$, it now remains to prove that for every $u,v\in P_n$ the
  $(1,n)$-th entries of $\phi_n(uv)$ and $\phi_n(u)\phi_n(v)$ are equal for all $u,v \in \aA_n^*$. In other words,
  \begin{equation}
    \label{eq:homom1}
    (uv)_{1n} =\max\set{u_{11}+v_{1n},u_{12}+v_{2n},\ldots, u_{1n}+v_{nn}}.
  \end{equation}
  However, it is clear that the longest nondecreasing subsequence in the product $uv$ is of the form $\alpha\beta$,
  where for some $j$ the word $\alpha$ is the longest nondecreasing subsequence in $u$ with entries in the interval
  $[1,j]$ and $\beta$ is the longest nondecreasing subsequence of $v$ with entries in the interval $[j,n]$. Therefore
  \eqref{eq:homom1} indeed holds and the result follows.

  Thus, by induction on $n$, the map $\phi_n$ is a homomorphism from $\aA_n^*$ to $U_n(\tset)$.
\end{proof}

\begin{lemma}
  The map $\phi_n$ factors to give a homomorphism $\phi_n\colon \plac_n \to U_n(\tset)$.
\end{lemma}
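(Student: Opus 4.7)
The plan is to show that $\phi_n(u) = \phi_n(v)$ whenever $u \placcong v$ in $\aA_n^*$; once this is established, $\phi_n$ descends through $\plac_n = \aA_n^*/{\placcong}$ to give the desired monoid homomorphism, the multiplicativity being inherited from the preceding lemma. Since the entries $w_{pq}$ with $p > q$ are identically $-\infty$, it suffices to check that, for each pair $p \leq q$, the value $w_{pq}$ depends only on the plactic class of $w$. Because $\placcong$ is generated by the Knuth relations $acb \placcong cab$ (for $a \leq b < c$) and $bac \placcong bca$ (for $a < b \leq c$), I only need to verify invariance of $w_{pq}$ under these elementary moves.

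The key auxiliary observation is that, for any fixed interval $[p,q]$, the operation of restricting a word to the subword of letters lying in $[p,q]$ respects the plactic congruence. Consider a Knuth relation involving letters $a,b,c$. If none of them or exactly one of them lies in $[p,q]$, the restriction is unchanged on both sides. If all three lie in $[p,q]$, the restriction is again an instance of the same Knuth relation. The only potentially problematic case is when $a,c \in [p,q]$ but $b \notin [p,q]$; however, the inequalities $a \leq b < c$ (respectively $a < b \leq c$) force $b$ to lie between $a$ and $c$, hence in $[p,q]$, so this case never arises.

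Granted this restriction property, $w_{pq}$ equals the length of the longest nondecreasing subsequence of the restricted word $w|_{[p,q]}$. By Schensted's theorem, this length coincides with the length of the first row of the Young tableau $\pplac{w|_{[p,q]}}$, and is therefore a plactic invariant of $w|_{[p,q]}$; combined with the observation of the previous paragraph, it is a plactic invariant of $w$ itself. This yields the equality $\phi_n(u) = \phi_n(v)$ entry by entry, completing the factorisation.

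I expect the main obstacle to be confirming the restriction property. The case analysis is brief, but it is the one step that genuinely uses the combinatorial shape of the Knuth relations rather than formal manipulations; the remainder of the argument is a direct appeal to Schensted's theorem together with the previous lemma.
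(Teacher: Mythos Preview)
Your argument is correct and matches the paper's: both check that the Knuth relations preserve $w_{pq}$ by a case analysis on which of the three letters fall in the interval $[p,q]$, using that $b$ always lies between $a$ and $c$ so the case $a,c\in[p,q]$, $b\notin[p,q]$ cannot occur. Your appeal to Schensted's theorem for the all-three-in case is a clean way to phrase what the paper leaves as a direct verification; just make the remaining two-letter cases $\{a,b\}$ and $\{b,c\}$ explicit (there the restrictions of the two sides are literally equal).
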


\begin{proof}
  Notice that the plactic relations $(baa,aba)$, $(bab,bba)$, $(bca,bac)$, $(cab,acb)$ (for $a < b < c$), with
  $a,b,c \in \aA$, preserve $w_{pq}$ for any $w \in \aA^{*}$ and any $p,q \in \aA$. This is easy to see by a direct
  verification of the possible cases: the intersection $\set{a,b,c} \cap [p,q]$ is either a single generator, or is
  equal to $\set{a,b}$, $\set{b,c}$ or $\set{a,b,c}$. It follows that if $u,v \in \aA_n^*$ are such that
  $u \placcong v$, then $\phi_n(u) = \phi_n(v)$.
\end{proof}

Define $f_n\colon \aA_n^* \to \aA_n^*$ as follows. For any $i \in \aA_n$, define
\[
  f_n(i) = n (n-1) \dotsm (i+1)(i-1) \dotsm 1,
\]
and for $w = a_{1}\dotsm a_{k}\in \aA_n^*$, where $a_{i} \in \aA$, define $f_n(w) = f_n(a_{k})\dotsm f_n(a_{1})$. It is
straightforward to prove that the antihomomorphism $f_n$ factors to give an antihomomorphism $f_n\colon \plac_n \to \plac_n$.


Let $F = \begin{bmatrix}F_{ij}\end{bmatrix}$ be the $n \times n$ tropical matrix with $F_{i,n+1-i} = 0$ for
$i=1,2,\ldots, n$ and all other entries being $-\infty$. Define an involution $\pi_n\colon U_{n}(\tset) \to U_{n}(\tset)$ by
$x \mapsto (FxF)^{T}$ (conjugation by $F$ composed with transposition); note that $\pi_n$ is an antihomomorphism (and in
fact an antiautomorphism, since it is bijective). Define $\sigma_n = \pi_n \circ \phi_n \circ f_n$; since $f_n$ and
$\pi_n$ are antihomomorphisms, $\sigma_n$ is a homomorphism.

Finally, define the homomorphism $\Psi_3\colon \plac_3 \to U_3(\tset) \times U_3(\tset)$ by
$\Psi_3(u) = \parens[\big]{\phi_3(u),\sigma_3(u)}$.

\begin{proposition}
  The map $\Psi_3$ is an embedding.
\end{proposition}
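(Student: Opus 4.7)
The plan is to show that the pair $\Psi_3(u) = (\phi_3(u), \sigma_3(u))$ determines the Young tableau $\pplac{u}$; since elements of $\plac_3$ are in bijection with such tableaux, injectivity on $\plac_3$ then follows. Every Young tableau over $\set{1,2,3}$ is specified by six non-negative integers $(\alpha_1,\alpha_2,\alpha_3,\beta_2,\beta_3,\gamma_3)$, where $\alpha_i$, $\beta_i$, and $\gamma_3$ count the $i$'s in rows $1$, $2$, and $3$ respectively; column-strictness imposes $\gamma_3 \leq \beta_2 \leq \alpha_1$ and $\beta_2 + \beta_3 \leq \alpha_1 + \alpha_2$. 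It therefore suffices to recover all six integers from $\Psi_3(u)$.

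First I would evaluate $\phi_3(u)$ on the row-reading word $w = 3^{\gamma_3}\,2^{\beta_2}\,3^{\beta_3}\,1^{\alpha_1}\,2^{\alpha_2}\,3^{\alpha_3}$ of $\pplac{u}$, simplifying each longest weakly increasing subsequence count by means of the validity constraints. Routine analysis gives $(\phi_3(u))_{11}=\alpha_1$, $(\phi_3(u))_{22}=\alpha_2+\beta_2$, $(\phi_3(u))_{33}=\alpha_3+\beta_3+\gamma_3$, $(\phi_3(u))_{12}=\alpha_1+\alpha_2$, $(\phi_3(u))_{13}=\alpha_1+\alpha_2+\alpha_3$, and $(\phi_3(u))_{23}=\alpha_3+\beta_2+\max(\alpha_2,\beta_3)$. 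Inspection then extracts $\alpha_1,\alpha_2,\alpha_3,\beta_2$ and the sum $\beta_3+\gamma_3$; only the split of this sum into $\beta_3$ and $\gamma_3$ can remain ambiguous.

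Next I would analyze $(\sigma_3(u))_{13}$, which by the definition of $\pi_3$ equals the length of the longest weakly increasing subsequence of $f_3(u)$. Since $f_3$ is an antihomomorphism,
\[
f_3(u) = (21)^{\alpha_3}(31)^{\alpha_2}(32)^{\alpha_1}(21)^{\beta_3}(31)^{\beta_2}(21)^{\gamma_3}.
\]
The central claim is $(\sigma_3(u))_{13} = |u|-\gamma_3$; combined with $|u| = (\phi_3(u))_{11}+(\phi_3(u))_{22}+(\phi_3(u))_{33}$, this determines $\gamma_3$ and hence $\beta_3$, completing the recovery.

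The lower bound is witnessed by the explicit subsequence taking the $\alpha_2+\alpha_3$ ones in the first two blocks, the $\alpha_1+\beta_3$ twos in the next two blocks, and the $\beta_2$ threes in the fifth block, for a total of $|u|-\gamma_3$. The matching upper bound is the main technical obstacle. The key observation is that each of the $|u|$ two-element columns $(xy)$ with $x>y$ comprising $f_3(u)$ can contribute at most one letter to any weakly increasing subsequence. If the subsequence contains at least one $3$, then all subsequent picks must be $\geq 3$; but the final block $(21)^{\gamma_3}$ sits to the right of every $3$ in $f_3(u)$ and contains only $1$s and $2$s, so its $\gamma_3$ columns are wasted, yielding the bound $|u|-\gamma_3$ at once. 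If the subsequence contains no $3$, it has the form $1^a 2^b$, and a cut-point analysis on the block structure shows that the optimum places the cut between $(31)^{\alpha_2}$ and $(32)^{\alpha_1}$, forcing the block $(31)^{\beta_2}$ (on the wrong side of the cut and containing no $2$s) to contribute nothing and giving the weaker bound $|u|-\beta_2 \leq |u|-\gamma_3$. The delicate step is ruling out alternative cut placements and intra-block splits to confirm these bounds are the best possible; with that in hand, $\Psi_3$ is injective.
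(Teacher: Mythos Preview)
Your argument is correct and takes a genuinely different route from the paper in its second half. Both proofs begin the same way: compute $\phi_3$ on the row reading and observe that it determines $\alpha_1,\alpha_2,\alpha_3,\beta_2$ and the sum $\beta_3+\gamma_3$, leaving only the split of that sum undetermined. From there the paper and you diverge.

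The paper switches to the column-reading parameters $k_1,\ldots,k_7$, computes the plactic normal form of $f_3(u)$ explicitly (a nontrivial rewriting step), reads off its row reading, and then uses the \emph{full} matrix $\sigma_3(u)$ together with $\phi_3(u)$ in a small linear-algebra/case argument to pin down all seven $k_i$. Your approach is more surgical: you note that $(\sigma_3(u))_{13}=(\phi_3(f_3(u)))_{13}$ is simply the length of the longest nondecreasing subsequence of the word $f_3(w)=(21)^{\alpha_3}(31)^{\alpha_2}(32)^{\alpha_1}(21)^{\beta_3}(31)^{\beta_2}(21)^{\gamma_3}$, and you compute this single quantity combinatorially as $|u|-\gamma_3$, which is exactly the missing datum. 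This sidesteps any plactic rewriting of $f_3(u)$ and uses only one entry of $\sigma_3(u)$ rather than all six.

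Your upper bound sketch is sound and can be made precise without difficulty. The ``contains a $3$'' case is immediate as you say, since every $3$ in $f_3(w)$ precedes the final block $(21)^{\gamma_3}$. For the ``no $3$'' case, the optimum of $\#\{1\text{'s in prefix}\}+\#\{2\text{'s in suffix}\}$ over all cut points is attained at an inter-block cut (intra-block cuts in $(21)$-blocks are neutral, and in $(31)$- or $(32)$-blocks they are dominated by an adjacent inter-block cut); evaluating at the six inter-block positions gives maximum $|u|-\beta_2$ at the cut between $(31)^{\alpha_2}$ and $(32)^{\alpha_1}$, and $|u|-\beta_2\le |u|-\gamma_3$ by the tableau inequality $\gamma_3\le\beta_2$. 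So the ``delicate step'' you flag is in fact routine once written out.
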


\begin{proof}
An arbitrary element of $\plac_3$ has a unique representation by a word of the form
$3^\zeta 2^\delta 3^\epsilon 1^\alpha 2^\beta 3^\gamma$, where $\zeta \leq \delta \leq \alpha$ and
$\delta+\epsilon \leq \alpha+\beta$. (This is the `row reading' of a Young tableau.) Notice that, by the definition of
$\phi_3$,
\begin{equation}
  \label{eq:phirowreading}
  \phi_3(3^\zeta 2^\delta 3^\epsilon 1^\alpha 2^\beta 3^\gamma) =
  \begin{bmatrix}
    \alpha  & \alpha + \beta & \alpha+\beta+\gamma                        \\
    -\infty & \beta+\delta   & \max\set{\beta+\epsilon} + \delta + \gamma \\
    -\infty & -\infty        & \zeta + \epsilon + \delta                  \\
  \end{bmatrix}.
\end{equation}

An arbitrary element of $\plac_3$ also has a unique representative of the form
\[
  u=(321)^{k_1}(21)^{k_2}(31)^{k_3}1^{k_4}(32)^{k_5}2^{k_6}3^{k_7},
\]
for some $k_i \in \nset \cup \set{0}$ and either $k_4 = 0$ or $k_5 = 0$. (This is the column reading of a Young
tableau.) Straightforward computation shows that
\[
  f_3(u)=\begin{cases}
    (21)^{k_7}(31)^{k_6} (31)^{k_5}(21)^{k_5}\\
    \qquad(32)^{k_3}(21)^{k_3}(32)^{k_2}(31)^{k_2}(32)^{k_1}(31)^{k_1}(21)^{k_1} &\text{if $k_4=0$},\\
    (21)^{k_7}(31)^{k_6} (32)^{k_4}(32)^{k_3}\\
    \qquad(21)^{k_3}(32)^{k_2}(31)^{k_2}(32)^{k_1}(31)^{k_1}(21)^{k_1} & \text{if $k_5=0$};
  \end{cases}
\]
thus
\[
  f_3(u)=\begin{cases}
    (321)^{2k_1+k_2+k_3+k_5}(21)^{k_7}(31)^{k_6}1^{k_5}2^{k_3}3^{k_2} & \text{if $k_4=0$},\\
    (321)^{2k_1+k_2+k_3}(21)^{k_7}(31)^{k_6} (32)^{k_4}2^{k_3}3^{k_2} & \text{if $k_5=0$}.
  \end{cases}
\]
Regardless of whether $k_4 = 0$ or $k_5 = 0$, the row reading of $u$ (or more precisely $\pplac{u}$) has the form:
\[
  3^{k_1}2^{k_1+k_2}3^{k_3+k_5}1^{k_1+k_2+k_3+k_4}2^{k_5+k_6}3^{k_7}.
\]
By \eqref{eq:phirowreading}, $\phi_3(u)$ determines the following numbers:
\begin{gather}
  \label{eq:phidetermined}
  \begin{gathered}
    k_7,\quad k_5+k_6,\quad k_1+k_2+k_3+k_4,\\
    k_1+k_2,\quad k_1+k_3+k_5,\quad\max \set{k_3+k_5, k_5+k_6}.
  \end{gathered}
\end{gather}
The row reading of $f(u)$ (or more precisely $\pplac[\big]{f(u)}$) has the form:
\[
  3^{2k_1+k_2+k_3+k_5}2^{2k_1+k_2+k_3+k_5+k_7}3^{2k_1+k_2+k_3+k_4+k_5+k_6}
  1^{2k_1+k_2+k_3+2k_5+k_6+k_7}2^{k_3+k_4}3^{k_2}.
\]
Therefore, since $\pi_n$ is bijective, we know by \eqref{eq:phirowreading} that $\sigma_3(u)$ determines the following
numbers:
\begin{gather}
  \label{eq:sigmadetermined}
  \begin{gathered}
    k_{2},\quad k_{3}+k_4,\quad 2k_1+k_2+k_3+2k_5+k_6+k_7,\\
    2k_1+k_2+k_3+k_5+k_7,\quad
    4k_1+2k_2+2k_3+k_4+2k_5+k_6,\\
    \max\set{k_3+k_4,2k_1+k_2+k_3+k_4+k_5+k_6}.
  \end{gathered}
\end{gather}
Consider another elemenent $u' \in \plac_3$, with
\[
  u'=(321)^{k'_1}(21)^{k'_2}(31)^{k'_3}1^{k'_4}(32)^{k'_5}2^{k'_6}3^{k'_7},
\]
and suppose that $\phi_3(u) = \phi_3(u')$ and $\sigma_3(u) = \sigma_3(u')$. By \eqref{eq:phidetermined}, $k_7 =
k'_7$. By \eqref{eq:sigmadetermined}, $k_2 = k'_2$. Consequently, \eqref{eq:phidetermined} implies that $k_1 = k'_1$. Thus,
\[
  k_3+k_4= k_3'+k_4',\quad k_3+ k_5 = k_3'+ k_5',\quad k_5+k_6= k_5'+k_6',
\]
respectively by \eqref{eq:phidetermined}, \eqref{eq:sigmadetermined}, and \eqref{eq:phidetermined} together with $k_1 = k'_1$.

If $k_4=k_4'=0$, then $k_3 = k_3'$, and so $k_5 = k_5'$, and so $k_6 =k_6'$; thus $u=u'$. Similarly, if $k_5=k_5'=0$,
then $k_6 = k_6'$ and $k_3 = k_3'$ , and so $k_6 =k_6'$; thus $u=u'$.

So suppose that $k_4 = 0$ and $k_5' = 0$. Then
\[
  k_3 = k_3 + k_4 = k_3' + k_4' = k_3' + k_5' + k_4' = k_3 + k_5 + k_4';
\]
so $k_5 + k_4' = 0$ and so $k_5 = 0$. Hence $k_6 = k_6'$, and $k_3 = k_3'$ and so $k_4 = k_4'$. Therefore $u =
u'$. Parallel reasoning shows that $k'_4 = 0$ and $k_5 = 0$ implies $u = u'$.

Therefore $\phi_3(u) = \phi_3(u')$ and $\sigma_3(u) = \sigma_3(u')$ implies $u = u'$.

Hence the map $\Psi_3\colon \plac_3 \to U_3(\tset) \times U_3(\tset)$ with
$\Psi_3(u) = \parens[\big]{\phi_3(u),\sigma_3(u)}$ is an embedding.
\end{proof}

Since $\Psi_3$ embeds $\plac_3$ into the direct product of two copies of $U_3(\tset)$, the following result is immediate
from \fullref{Theorem}{thm:tropicalid}:

\begin{corollary}
$\plac_3$ satisfies $u_2(xy,yx) = v_2(xy,yx)$.
\end{corollary}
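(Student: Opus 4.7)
The plan is to assemble the Corollary directly from the two preceding results: the Proposition establishing that $\Psi_3\colon\plac_3 \to U_3(\tset)\times U_3(\tset)$ is an embedding, and \fullref{Theorem}{thm:tropicalid} which, specialized to $n=3$, asserts that $U_3(\tset)$ satisfies $u_2(xy,yx) = v_2(xy,yx)$.

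First I would observe that the class of monoids satisfying a fixed identity is closed under taking direct products (the identity holds componentwise) and under taking submonoids (any substitution into the submonoid is in particular a substitution into the ambient monoid). Applying this with the identity $u_2(xy,yx) = v_2(xy,yx)$ and \fullref{Theorem}{thm:tropicalid}, the direct product $U_3(\tset) \times U_3(\tset)$ satisfies $u_2(xy,yx) = v_2(xy,yx)$.

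Next, by the previous Proposition, $\Psi_3$ embeds $\plac_3$ as a submonoid of $U_3(\tset) \times U_3(\tset)$. Thus given any $X,Y \in \plac_3$, the images $\Psi_3(X), \Psi_3(Y) \in U_3(\tset) \times U_3(\tset)$ satisfy $u_2(\Psi_3(X)\Psi_3(Y),\Psi_3(Y)\Psi_3(X)) = v_2(\Psi_3(X)\Psi_3(Y),\Psi_3(Y)\Psi_3(X))$; since $\Psi_3$ is an injective homomorphism, pulling back yields $u_2(XY,YX) = v_2(XY,YX)$ in $\plac_3$. This completes the proof.

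There is no genuine obstacle: the entire content has been packaged into \fullref{Theorem}{thm:tropicalid} and the embedding Proposition, and the Corollary is the routine transfer of an identity from an overmonoid to a submonoid of a direct product. The only thing worth stating explicitly is the closure of identity-satisfaction under submonoids and direct products, which the author already flags by writing that the result is \emph{immediate} from \fullref{Theorem}{thm:tropicalid}.
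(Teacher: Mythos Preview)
Your proposal is correct and follows exactly the approach the paper intends: the paper states only that the result is immediate from \fullref{Theorem}{thm:tropicalid} together with the embedding $\Psi_3\colon\plac_3\hookrightarrow U_3(\tset)\times U_3(\tset)$, and you have simply unpacked what ``immediate'' means by invoking closure of identity-satisfaction under direct products and submonoids.
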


\bibliography{\jobname}
\bibliographystyle{alphaabbrv}

\end{document}